\newtheorem{theorem}{Theorem}[section]
\newtheorem{proposition}[theorem]{Proposition}
\newtheorem{lemma}[theorem]{Lemma}
\newtheorem{definition}[theorem]{Definition}
\newtheorem{remark}[theorem]{Remark}
\newcommand{\IF}{\mathbb{F}}
\newcommand{\ZZ}{\mathbb{Z}}
\begin{document}

\title{The maximum number of rational points for a genus $4$ curve over $\IF_{7}$ is $24$}

\author{Alessandra Rigato}

\date{}
\maketitle

\begin{abstract}
In this paper we show that the maximum number of rational points possible for a smooth, projective, absolutely irreducible genus $4$ curve over a finite field $\IF_{7}$ is $24$. It is known that a genus $4$ curve over $\IF_{7}$ can have at most $25$ points. In this paper we prove that such a curve can have at most $24$. On the other hand we provide an explicit example of a genus $4$ curve over $\IF_{7}$ having $24$ points.
\end{abstract}

\section{Introduction}

Given a prime power $q$ and a positive integer $g$ it has become an interesting challenge to determine the largest number $N_q(g)$ of rational points possible for a smooth, projective, absolutely irreducible genus $g$ curve over a finite field $\mathbb{F}_q$. Tables are constantly updated at \cite{manypoints}. The main result of this note is the following Theorem.

\begin{theorem}\label{teo: theorem}
The maximum number of points $N_{7}(4)$ for a genus $4$ curve defined over $\IF_{7}$ is $24$. Indeed:
\begin{enumerate}
\item Every genus $4$ curve defined over $\IF_{7}$ has at most $24$ rational points.
\item The projective, smooth, absolutely irreducible curve $C$ defined over $\IF_{7}$ by the set of affine equations
\begin{equation}\label{eq: C}
C: \,\left \{
\begin{array}{l}
y^{2} = x^{3}+3 \\
z^{2} = -x^3 + 3
\end{array}
\right.
\end{equation}
is a genus $4$ curve having $24$ rational points. Its Zeta function is
\[Z(t)=\frac{(7t^{2}+t+1)(7t^{2}+5t+1)^{3}}{(1-7t)(1-t)}.\]
\end{enumerate}
\end{theorem}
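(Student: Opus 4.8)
The plan is to prove the two assertions separately, concentrating the effort on the upper bound (1), since the construction (2) reduces to explicit point-counting once the right decomposition of the Jacobian is found. For (1), I would argue by contradiction: assume a genus $4$ curve $C/\IF_7$ with $\#C(\IF_7)=25$ and analyze its Frobenius. Writing the real Weil polynomial as $h(t)=\prod_{j=1}^{4}(t-t_j)$ with $t_j=\alpha_j+\overline{\alpha_j}\in[-2\sqrt7,\,2\sqrt7]$, the hypothesis $\#C(\IF_7)=7+1-\sum_j t_j=25$ forces the integer $\sum_j t_j=-17$, so the traces are very negative on average. The first step is to enumerate all integral $h$ with this trace whose roots are real and lie in $[-2\sqrt7,\,2\sqrt7]$ and which satisfy the positivity constraints coming from the point counts over extensions: in particular $N_2=106-\sum_j t_j^2\ge N_1=25$ forces $\sum_j t_j^2\le 81$, and $N_r\ge 0$ for small $r$ trims the list further. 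This is a finite, explicit search yielding a short list of candidate Weil polynomials.

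The heart of the argument is to show that none of the surviving candidates is the Weil polynomial of a Jacobian. Here I would invoke, in order, the Honda--Tate/Waterhouse existence criteria to discard those $h$ that are not even Weil polynomials of an abelian variety over $\IF_7$ (via Newton-polygon and $p$-rank constraints), and then the Howe--Lauter obstruction to being a Jacobian for the remaining cases. The mechanism is that when $h$ factors into coprime pieces, every principally polarized abelian variety in the corresponding isogeny class is forced to decompose as a product of lower-dimensional ppav's; since the canonically polarized Jacobian of a smooth, irreducible curve has an irreducible theta divisor, no such isogeny class can contain a Jacobian. I expect this Jacobian-realizability step --- making the Howe--Lauter resultant obstruction effective for each candidate, and disposing by hand of the few cases with repeated factors --- to be the main obstacle; the numeric and Honda--Tate sieving beforehand is comparatively routine.

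For (2), I would first identify $C$ as the fiber product over the $x$-line of the elliptic curves $E_1:y^2=x^3+3$ and $E_2:z^2=-x^3+3$, so that $C\to\IP^1_x$ is a $(\ZZ/2\ZZ)^2$-cover with three intermediate double covers $E_1$, $E_2$, and the genus-$2$ curve $D:w^2=-x^6+9$ (where $w=yz$). Riemann--Hurwitz gives $g(C)=g(E_1)+g(E_2)+g(D)=1+1+2=4$, and the Kani--Rosen idempotent relations give the isogeny decomposition $\mathrm{Jac}(C)\sim E_1\times E_2\times\mathrm{Jac}(D)$; splitting the bielliptic $D$ (via $x\mapsto-x$) further yields $\mathrm{Jac}(C)\sim E_1\times E_2\times E_3\times E_4$, with $E_3,E_4$ its two elliptic quotients. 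A direct point count over $\IF_7$ gives the Frobenius traces $-5,-5,-1,-5$ (equivalently $\#E=13,13,9,13$), whence $P(t)=(7t^2+5t+1)^3(7t^2+t+1)$ and the stated $Z(t)$, and $\#C(\IF_7)=8-(-5-5-1-5)=24$. It remains to verify that $C$ is smooth and absolutely irreducible, which follows from the branch data showing the cover is geometrically connected with tame ramification away from finitely many points; this is routine.
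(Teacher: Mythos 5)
Your treatment of part 2 is essentially the paper's: the same fibre product over the $x$-line, the same three quadratic quotients (the paper's genus-$2$ quotient $w^2=-x^6+2$ is your $w^2=-x^6+9$, as $9\equiv 2 \bmod 7$), and the same factorization of the Weil polynomial; whether one assembles it via Kani--Rosen idempotents or via the character-sum formula for abelian covers is immaterial. The problem is part 1. Your candidate search is the right first step and, as the paper records, it leaves exactly one survivor, $h(t)=(t+2)(t+5)^{3}$. But neither of the elimination mechanisms you propose applies to it. Honda--Tate/Waterhouse cannot discard it: both traces are prime to $7$, so the ordinary isogeny class $E_{10}\times E_{13}^{3}$ certainly exists over $\IF_7$. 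And the splitting obstruction you describe --- a coprime factorization forcing every principally polarized variety in the class to be a product, hence to have reducible theta divisor --- is only valid when the resultant of the coprime factors is $\pm 1$; here $\mathrm{Res}(t+2,\,t+5)=3$, so a nontrivial gluing along $3$-torsion is possible and no contradiction arises at this level. This is not ``a few repeated-factor cases to dispose of by hand''; it is the entire difficulty of the theorem.

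What the resultant $3$ actually buys (Proposition \ref{prop: HL}) is the existence of a degree-$3$ map from $X$ to an elliptic curve $E$ with $10$ points. The paper must then (i) rule out that $\IF_{7}(X)/\IF_{7}(E)$ is Galois, which requires an explicit ray class field computation for the two elliptic curves over $\IF_{7}$ with $10$ rational points, and (ii) in the non-Galois case, pass to the $S_{3}$ Galois closure $\overline{X}$, enumerate the five admissible splitting types of the ten rational points of $E$ compatible with $a(X)=[25,1,\ldots]$ and the degree-$6$ different, compute the genus and point count of $\overline{X}$ in each case, and kill each case by the Oesterl\'e bound or by a genuine resultant-$\pm 1$ contradiction. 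Nothing in your plan substitutes for these steps, so as written the proof of part 1 has a gap exactly where you predicted the main obstacle would be.
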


It is well known that an upper bound for $N_{7}(4)$ is $25$. This follows by Oesterl\'e's optimization of Serre's explicit formula bound \cite[Theorem 7.3]{schoof}. 
The bound of $25$ points was also obtained by Ihara \cite{ihara} and St\"ohr-Voloch \cite[Proposition 3.2]{stohrvoloch}. We show in Section \ref{sec: X25} that a genus $4$ curve over $\IF_{7}$ with $25$ rational points can not exist. This proves the first part of the Theorem. The second part is proved in Section \ref{sec: C24}. In Section \ref{sec: claim3} we present some properties and results on the Zeta function and the real Weil polynomial of a curve, while in Section \ref{sec: nonGalois} we introduce some notations and number theoretical results that will be useful for the study of non-Galois function fields extensions arising in Section \ref{sec: X25}.

\section{Zeta function and real Weil polynomial of a curve}\label{sec: claim3}

Many authors have recently focused on properties of the Zeta function and the real Weil polynomial of a curve in order to improve the bounds for the number of rational points of a curve over a finite field. The Zeta function of a curve $X$ defined over $\IF_{q}$ is given by
\[
Z(t)=\prod_{d\geq 1}\frac{1}{(1-t^d)^{a_d}},
\]
where $a_{d}$ denotes the number of places of degree $d$ of the function field of $X$. In particular $a_{1}=\#X(\IF_{q})$ is the number of rational places. If $X$ has genus $g$, its Zeta function $Z(t)$ is a rational function of the form
\[
Z(t) = \frac{L(t)}{(1-t)(1-qt)}, \quad\textrm{where}\quad L(t)  =  \prod_{i=1}^{g}(1-\alpha_{i} t)(1-\overline{\alpha_{i}} t)
\]
for certain $\alpha_{i} \in \mathbb{C}$ of absolute value $\sqrt{q}$. Therefore the Weil polynomial $L(t)=q^g t^{2g} + b_{2g-1}t^{2g-1}+\ldots + b_{1} t+1 \in \mathbb{Z}[t]$ is determined by the coefficients $b_{1}, \ldots, b_{g}$ which are in turn determined by the numbers $a_{1}, \ldots, a_{g}$ \cite[Section 5.1]{stichtenoth}. To a genus $g$ curve $X$ having $L(t)$ as numerator of its Zeta function, we associate the real Weil polynomial of $X$ defined by
\[
h(t)=\prod_{i=1}^{g}(t-\mu_i) \;\; \in\, \mathbb{Z}[t],
\]
where $\mu_{i}=\alpha_{i}+\overline{\alpha_{i}}$ is a real number in the interval $[-2\sqrt{q}, 2\sqrt{q}]$, for all $i=1,\ldots, g$. We have $L(t)=t^{g}h(qt+1/t)$. Moreover we denote by $a(X)=[a_{1},a_{2}, \ldots, a_{d}, \ldots]$ the vector whose $d$-th entry displays the number $a_{d}=a_{d}(X)$ of places of degree $d$ of the function field of $X$. Not every polynomial $h(t)$ with all zeros in the interval $[-2\sqrt{q}, 2\sqrt{q}]$ and with the property that
\[
\frac{L(t)}{(1-t)(1-qt)}=\prod_{d\ge 1}\frac{1}{(1-t^{d})^{a_{d}}}
\]
for certain integers $a_{d}\ge 0$ is necessarily the real Weil polynomial of a curve. The following result is due to Serre \cite[page Se 11]{serre}, \cite[Lemma 1]{lauter}.

\begin{proposition}  \label{prop: Res1}
Let $h(t)$ be the real Weil polynomial of a curve $X$ over $\IF_{q}$. Then $h(t)$ cannot be factored as $h(t) = h_{1}(t)h_{2}(t)$, with $h_{1}(t)$ and $h_{2}(t)$ non-constant polynomials in $\mathbb{Z}[t]$ such that the resultant of $h_{1}(t)$ and $h_{2}(t)$ is $\pm 1$.
\end{proposition}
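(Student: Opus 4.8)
The plan is to translate the purely arithmetic hypothesis — a factorization $h=h_1h_2$ over $\mathbb{Z}$ with $\mathrm{Res}(h_1,h_2)=\pm1$ — into a geometric statement about the Jacobian $J=\mathrm{Jac}(X)$, and then to contradict the indecomposability of the canonical polarization of $J$. Recall that $L(t)$ is the characteristic polynomial of the Frobenius endomorphism $\pi$ of $J$ on a rational Tate module $V$, with eigenvalues $\alpha_i,\overline{\alpha_i}$, and that $\overline{\alpha_i}=q/\alpha_i$, so $\mu_i=\alpha_i+q/\alpha_i$. The key auxiliary object is the endomorphism $\psi=\pi+\pi^{\dagger}\in\mathrm{End}(J)$, where $\pi^{\dagger}=q\pi^{-1}$ is the image of $\pi$ under the Rosati involution attached to the principal polarization $\lambda$ of $J$. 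By construction $\psi$ is fixed by the Rosati involution, its eigenvalues on $V$ are the $\mu_i$ (each occurring twice), and, using the semisimplicity of Frobenius, $\psi$ is semisimple and satisfies $h(\psi)=0$.

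Next I would exploit the resultant condition. Since $\mathrm{Res}(h_1,h_2)$ lies in the ideal $(h_1,h_2)\subset\mathbb{Z}[t]$, the hypothesis gives a B\'ezout identity $a(t)h_1(t)+b(t)h_2(t)=\pm1$ with $a,b\in\mathbb{Z}[t]$. Evaluating at $\psi$ and using $h_1(\psi)h_2(\psi)=h(\psi)=0$, the element $P=\pm\,a(\psi)h_1(\psi)\in\mathrm{End}(J)$ satisfies $P^2=P$, and since $h_1,h_2$ are non-constant it is neither $0$ nor the identity; it is moreover Rosati-symmetric because $\psi$ is. A symmetric idempotent splits $J$, as an abelian variety, into a genuine product $J\cong A_1\times A_2$ with $A_1=\operatorname{im}(P)$ and $A_2=\ker(P)$ of respective dimensions $\deg h_2$ and $\deg h_1$, both positive. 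Because $P$ is Rosati-symmetric, $A_1$ and $A_2$ are orthogonal for $\lambda$, so $\lambda$ restricts to polarizations $\lambda_1,\lambda_2$ with $\lambda=\lambda_1\times\lambda_2$; as $\lambda$ is principal, so are $\lambda_1$ and $\lambda_2$, and we obtain a nontrivial decomposition $(J,\lambda)\cong(A_1,\lambda_1)\times(A_2,\lambda_2)$ of principally polarized abelian varieties.

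Finally I would invoke the classical fact that the canonical principal polarization of the Jacobian of a smooth, projective, absolutely irreducible curve is indecomposable — equivalently, that its theta divisor is irreducible — so $(J,\lambda)$ admits no such product decomposition, which is the desired contradiction. I expect the main obstacle to be the second step: upgrading the isogeny-level factorization $J\sim A_1\times A_2$, which any factorization of $h$ already produces, to an honest isomorphism of principally polarized abelian varieties. This is exactly where the strong hypothesis $\mathrm{Res}(h_1,h_2)=\pm1$, rather than mere coprimality over $\mathbb{Q}$, is indispensable, since it is what guarantees both the integrality of the idempotent $P$ in $\mathrm{End}(J)$ and the splitting of the principal polarization. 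The supporting technical points — that $\psi$ is semisimple with $h(\psi)=0$, and that a Rosati-symmetric idempotent yields an orthogonal product of principally polarized factors — are routine once the setup is in place.
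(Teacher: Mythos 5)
The paper does not actually prove this proposition; it cites Serre's notes and Howe--Lauter, and the argument in those sources is precisely the one you reconstruct: the B\'ezout identity $a h_1 + b h_2 = \pm 1$ over $\mathbb{Z}$, evaluated at the Rosati-symmetric, semisimple element $\psi = \pi + q\pi^{-1}$ which is killed by $h$, produces a nontrivial symmetric idempotent in $\mathrm{End}(\mathrm{Jac}(X))$, hence a splitting of the principally polarized Jacobian into a nontrivial product of principally polarized abelian varieties, contradicting the irreducibility of the theta divisor of the Jacobian of an absolutely irreducible curve. Your proof is correct and follows essentially the same route as the cited proof, with all the genuinely load-bearing points (integrality of the idempotent from the unit resultant, orthogonality of the factors from Rosati-symmetry) correctly identified.
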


Generalizations of this result are due to E.~Howe and K.~Lauter, for example \cite[Theorem 1, Proposition 13]{howelauter}:

\begin{proposition}\label{prop: HL}
Let $h(t) = (t - \mu)h_2(t)$ be the real Weil polynomial of a curve $X$ over $\IF_{q}$, where $t - \mu$ is the real Weil polynomial of an elliptic curve $E$ and $h_2(t)$ a non-constant polynomial in $\ZZ[t]$ coprime with $t-\mu$. If $r \neq \pm1$ is the resultant of $t-\mu$ and the radical of $h_2(t)$, then there is a map from $X$ to an elliptic curve isogenous to $E$, of degree dividing $r$.
\end{proposition}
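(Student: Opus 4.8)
The plan is to translate the factorization of the real Weil polynomial into a decomposition of the Jacobian up to isogeny, to realize the elliptic factor as a saturated abelian subvariety, and then to produce the desired morphism by composing the Abel--Jacobi embedding with the projection onto the elliptic quotient, reading off its degree from the induced polarization. Throughout I write $J = J_X$ for the Jacobian, equipped with its canonical principal polarization $\lambda \colon J \xrightarrow{\sim} J^{\vee}$ coming from the theta divisor of $X$, and I let $\pi$ denote the Frobenius endomorphism, so that the real Frobenius $\pi + \pi^{\vee}$ is a symmetric element of $\mathrm{End}(J)$ annihilated by $h(t)=(t-\mu)h_2(t)$.

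First I would use Honda--Tate theory to turn the coprime factorization $h(t)=(t-\mu)h_2(t)$ into an isogeny decomposition $J \sim E \times A$, where $E$ is the elliptic curve with real Weil polynomial $t-\mu$ and $A$ has real Weil polynomial $h_2(t)$. Because $t-\mu$ and $h_2$ are coprime, the Chinese Remainder Theorem yields orthogonal idempotents in the commutative subalgebra $\QQ[\pi+\pi^{\vee}] \subseteq \mathrm{End}(J)\otimes_{\ZZ}\QQ$, and these cut out complementary abelian subvarieties. Concretely I would take the \emph{saturated} elliptic subvariety $B = (\ker h_2(\pi+\pi^{\vee}))^{0}$ (isogenous to $E$) together with its complement $A'$ with respect to $\lambda$. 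The point of saturating, and of using the polarized complement, is that the gluing group $B \cap A'$ is then a finite group scheme whose order is governed by how much $t-\mu$ and $h_2$ share, which is where the resultant will enter.

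Second, I would produce the map itself. The Abel--Jacobi morphism $\varphi \colon X \hookrightarrow J$ has image generating $J$ as a group, so its composite with the quotient $q \colon J \twoheadrightarrow J/A' =: E'$ cannot be constant (the image of $X$ is not contained in the proper abelian subvariety $A'$); since $E' \sim B \sim E$ it is an elliptic curve isogenous to $E$, and $f = q \circ \varphi \colon X \to E'$ is the sought morphism. To control $\deg f$ I would use the standard fact that, via the principal polarizations on $J$ and on $E'$, the pullback $f^{*} \colon E' \to J$ is the transpose of the pushforward $f_{*} \colon J \to E'$, and that $f_{*}\circ f^{*} = [\deg f]$ on $E'$. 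Thus $\deg f$ is governed by the polarization that $\lambda$ induces on the elliptic factor through the composite $B \to J \xrightarrow{\lambda} J^{\vee} \to B^{\vee}$, and the task reduces to computing the kernel of this composite, equivalently the order of $B \cap A'$.

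The main obstacle is exactly this last computation: showing that the relevant quantity, and hence $\deg f$, divides the resultant $r$ of $t-\mu$ with the \emph{radical} of $h_2$, rather than with $h_2$ itself. The natural estimate identifies $B\cap A'$ with a subquotient of $J[\,t-\mu\,]$ viewed as a module over $\ZZ[\pi+\pi^{\vee}]$, and a first pass bounds its order in terms of $\mathrm{Res}(t-\mu,h_2)$; the refinement to the radical requires analyzing this module prime by prime and observing that repeated factors of $h_2$ inflate the resultant without enlarging the support of the gluing group, so that only the squarefree part contributes. Carrying out this local module-theoretic analysis carefully --- and checking that the resulting divisibility is insensitive to the choices of saturation and of complement --- is the delicate heart of the argument, whereas the isogeny decomposition and the construction of $f$ are comparatively formal. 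This is the theorem of Howe and Lauter, and for the precise bookkeeping I would follow their treatment in \cite[Proposition 13]{howelauter}.
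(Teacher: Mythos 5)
This proposition is not proved in the paper at all: it is quoted as a known result of Howe and Lauter, with a bare citation to \cite{howelauter} (their Theorem~1 combined with Proposition~13), so there is no internal argument to measure your attempt against. Your outline does reproduce the genuine Howe--Lauter strategy: use the coprimality of $t-\mu$ and $h_2$ to split $J_X$ up to isogeny into an elliptic factor and a complement (semisimplicity of Frobenius suffices here; Honda--Tate is not really needed since $E$ is given), compose the Abel--Jacobi map with the projection onto the elliptic quotient to get a nonconstant $f\colon X\to E'$ with $E'\sim E$, and control $\deg f$ via $f_*\circ f^*=[\deg f]$ and the polarization that the theta divisor induces on the elliptic factor, whose degree is the order of the gluing group $B\cap A'$.

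That said, you should be aware that your write-up is not a proof but a plan: it explicitly defers the one step that carries all of the content --- namely that the order of the gluing group, and hence $\deg f$, divides the resultant of $t-\mu$ with the \emph{radical} of $h_2$ rather than with $h_2$ itself --- calling it ``the delicate heart'' and then citing \cite{howelauter} for the bookkeeping. Since the paper itself supplies nothing beyond the same citation, your sketch is at least as informative as the source text, and every step you do spell out is correct; but if the goal were a self-contained proof, the local analysis of $J[t-\mu]$ as a module over $\ZZ[\pi+\pi^{\vee}]$ (equivalently, the bound on the gluing exponent by the reduced resultant, which for a monic linear factor coincides with the ordinary resultant) would still have to be carried out.
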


\section{An explicit example of a genus $4$ curve having $24$ points over $\IF_{7}$}\label{sec: C24}

In this section we prove that the curve $C$ defined by the set of affine equations \eqref{eq: C} is a genus $4$ curve having $24$ rational points over $\IF_{7}$. 

\begin{proof}[Proof of Theorem \ref{teo: theorem} part 2.]
The function field of the smooth projective curve $C$ is $\IF_{7}(x,y,z)$, where $x$, $y$ and $z$ are defined by the set of equations \eqref{eq: C}. This is a Galois extension of the rational function field $\IF_{7}(x)$ of Galois group $G$ isomorphic to $\ZZ_{2}\times \ZZ_{2}$. The three quadratic subfields are: 
\begin{enumerate}
\item $\IF_{7}(x,y)$ the function field of the curve of affine equation
\begin{equation}\label{eq: Etilde}
\tilde{E}:\,y^{2}=x^{3}+3.
\end{equation}
This is a genus $1$ curve with $13$ rational points over $\IF_{7}$. The Weil polynomial is $7t^{2}+5t+1$.
\item $\IF_{7}(x,z)$ the function field of the curve of affine equation $z^{2}=-x^{3}+3$, which is isomorphic to $\tilde{E}$ and has hence the same Weil polynomial. 
\item $\IF_{7}(x,w)$, where $w=xy$ and hence $w^{2}=(x^{3}+3)(-x^{3}+3)=-x^{6}+2$. This is an affine equation of a smooth projective curve of genus $2$. One checks that for each $x \in \IF_{7}$ there are two rational points. Thus in total $14$ rational points, since the place at infinity has degree $2$. A small computation gives that there are $19$ places of degree $2$, thus the Weil polynomial is $(7t^{2}+t+1)(7t^{2}+5t+1)$. 
\end{enumerate}
Since $G$ is abelian we can compute the Zeta function $Z_{C}(t)$ of $C$ by means of the following \cite[Proposition 14.9]{rosen}
\[Z_{C}(t)=\prod_{\chi \in \widehat{G}}L(t,\chi),\]
where $\widehat{G}$ denotes the group of characters $\chi: G \to \{\pm 1\}$. For the non-trivial characters $\chi$, the L-function $L(t,\chi)=\prod_{P} (1-\chi(P)t^{\textrm{deg}\,P})^{-1}$ is precisely the Weil polinomial $L(t)$ of the curve corresponding to the quadratic function field fixed by $\textrm{ker}\,\chi$. The L-function of the trivial character is the Zeta function of $\mathbb{P}^{1}$. Thus the Weil polynomial of $C$ equals the product
\[(7t^{2}+t+1)(7t^{2}+5t+1)^{3}=7^{4}t^8 + \ldots + 118t^{2} + 16t + 1,\quad a(C)=[24, 3, 120, 558, \ldots] \] of the Weil polynomials of the quadratic function fields described above. From this it follows that the genus of $C$ is $4$ and that the number of rational points equals $24$.
\end{proof}

\begin{remark}
The elliptic curve $\tilde{E}$ defined by \eqref{eq: Etilde} is the unique genus $1$ optimal curve over $\IF_{7}$. Indeed, an optimal elliptic curve over $\IF_{7}$ has Frobenius polynomial $t^{2}+5t+7$. Hence, the discriminant is $-3$ and the curve admits an automorphism of order $3$. Therefore its Weierstrass equation is $y^{2}=x^{3}+b$ for some $b \in \IF_{7}^{*}$ \cite[Theorem 10.1]{silverman}. Only for $b=3$ one has a projective curve attaining the Hasse-Weil bound $q+1+g\sqrt{2q}$ of $13$ rational points.
\end{remark}

\section{Non-Galois extensions of degree $3$}\label{sec: nonGalois}

In this section we introduce some notation and adapt some results of \cite{rigato} to the examples in this paper. Let $E$ be an elliptic curve defined over $\IF_{q}$ and let $X$ be a genus $g$ curve over $\IF_{q}$. Let $X \to E$ be a morphism of degree $3$ such that the induced function field extension $\IF_{q}(X)/\IF_{q}(E)$ is non-Galois. 
 
\begin{definition}\label{def: defioverlineprime}
We denote by $\overline{X}$ the curve whose function field is the normal closure of $\IF_{q}(X)$ with respect to $\IF_{q}(E)$: it is a Galois extension of $\IF_{q}(E)$ having Galois group isomorphic to the symmetric group $S_3$. We denote by $X'$ the curve having as function field the quadratic extension of $\IF_{q}(E)$ corresponding to the group $A_3\simeq\mathbb{Z}_3$, the unique (normal) subgroup of $S_3$ of index $2$. The situation is described in the following picture:
\begin{center}
$
\xymatrix@C=0.1pc { 
&&& \overline{X} \ar @{->}[dlll]_(.68){{\scriptscriptstyle 2}} \ar @{->}[dll]_(.67){{\scriptscriptstyle 2}} \ar @{->}[dl]_(.61){{\scriptscriptstyle 2}}  \ar@{->}[drr]^(.64){{\scriptscriptstyle 3}} && \\
{\qquad X} \ar @{->}[drrr]_(.32){{\scriptscriptstyle 3}} & {\;\;Y} \ar @{->}[drr]_(.33){{\scriptscriptstyle 3}} & **[r]Z \ar @{->}[dr]_(.39){{\scriptscriptstyle 3}} & && X' \ar @{->}[dll]^(.36){{\scriptscriptstyle 2}} \\
&&& E & &} 
\quad$
$
\xymatrix@C=0.1pc { 
&&& \{1\} \ar @{->}[dlll]_(.68){{\scriptscriptstyle 2}} \ar @{->}[dll]_(.67){{\scriptscriptstyle 2}} \ar @{->}[dl]_(.61){{\scriptscriptstyle 2}}  \ar@{->}[drr]^(.64){{\scriptscriptstyle 3}} && \\
{\qquad \mathbb{Z}_2} \ar @{->}[drrr]_(.32){{\scriptscriptstyle 3}} & {\;\;\mathbb{Z}_2} \ar @{->}[drr]_(.33){{\scriptscriptstyle 3}} & \mathbb{Z}_2 \ar @{->}[dr]_(.39){{\scriptscriptstyle 3}} & && \mathbb{Z}_3 \ar @{->}[dll]^(.36){{\scriptscriptstyle 2}} \\
&&& G & &} 
$
\end{center}
\end{definition}

In the rest of the note we will make often use of the following notation:

\begin{definition}\label{def: abcpointsE}
Consider a rational place $P$ of $\IF_{q}(E)$. We say that $P$ is
\begin{itemize}
\item[$a)$] an $A$-point of $E$, if $P$ splits completely in $\IF_{q}(X)$;
\item[$b)$] a $B$-point of $E$, if $P$ splits into two rational places of $\IF_{q}(X)$, one un\-ra\-mi\-fied and the other one with ramification index $2$;
\item[$c)$] a $B'$-point of $E$, if $P$ splits into two places of $\IF_{q}(X)$, one rational and the other one of degree $2$;
\item[$d)$] a $C$-point of $E$, if $P$\! is totally ramified in $\IF_{q}(X)$\! with ramification index $3$;
\item[$e)$] a $C'$-point of $E$, if $P$ is inert in $\IF_{q}(X)$ of degree $3$;
\end{itemize}
Moreover we denote by $a$, $b$, $b'$, $c$, $c'$ the number of $A$-points, $B$-points, $B'$-points, $C$-points and $C'$-points of $E$ respectively. 
\end{definition}

Here's an auxiliary lemma.
\begin{lemma}\label{lem: Cpoints}
Let $q \equiv 1 \;\textrm{mod}\,\; 3$ be a power of a prime $p\neq 2$. Then the local field $\IF_{q}((t))$ does not admit an extension $K$ of Galois group $G=Gal(K/\IF_{q}((t)))$ isomorphic to the symmetric group $S_{3}$.
\end{lemma}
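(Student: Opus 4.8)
The plan is to show that any hypothetical $S_3$-extension $K/\IF_q((t))$ would have to be tamely ramified, and then to extract a contradiction from the well-known structure of the tame quotient of the absolute Galois group of a local field.

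First I would record that the hypothesis forces $\gcd(6,p)=1$: since $q\equiv 1 \bmod 3$, the prime power $q$ is not divisible by $3$, so $p\neq 3$, and $p\neq 2$ by assumption. Hence an extension with $[K:\IF_q((t))]=|S_3|=6$ has degree prime to $p$, so it is tamely ramified. The entire argument therefore takes place inside the tame part of the Galois theory of $\IF_q((t))$, which has residue field $\IF_q$ of characteristic $p$.

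Next I would recall the structure of the Galois group of the maximal tamely ramified extension of a local field with finite residue field $\IF_q$ (standard local field theory, e.g.\ Serre's \emph{Local Fields} or Neukirch): it is topologically generated by a Frobenius lift $\sigma$ and a generator $\tau$ of the tame inertia, subject to the single relation $\sigma\tau\sigma^{-1}=\tau^{q}$, with the tame inertia procyclic and $\sigma$ acting on it by the $q$-th power map. Consequently, for any finite tame Galois extension with group $G$, the inertia subgroup $I\trianglelefteq G$ is cyclic, the quotient $G/I$ is cyclic (the residue/unramified part), and a lift $\bar\sigma$ of a generator of $G/I$ satisfies $\bar\sigma\,\bar\tau\,\bar\sigma^{-1}=\bar\tau^{\,q}$ for a generator $\bar\tau$ of $I$. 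I would then apply this with $G=S_3$. Since $I$ must be a cyclic normal subgroup of $S_3$, it is either trivial or equal to $A_3\simeq\ZZ/3$ (the group $S_3$ itself is not cyclic, so it cannot be the inertia). If $I=\{1\}$ the extension is unramified and $G\cong G/I$ is cyclic, which is impossible. If $I=A_3$, then $G/I\simeq \ZZ/2$ and the Frobenius lift $\bar\sigma$ lies outside $A_3$, hence is a transposition; conjugation by a transposition inverts every $3$-cycle, so $\bar\sigma\,\bar\tau\,\bar\sigma^{-1}=\bar\tau^{-1}$. The tame relation then forces $\bar\tau^{-1}=\bar\tau^{\,q}$, i.e.\ $\bar\tau^{\,q+1}=1$; as $\bar\tau$ has order $3$ this gives $q\equiv -1 \bmod 3$, contradicting $q\equiv 1 \bmod 3$.

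The crux of the proof is the case $I=A_3$, where everything hinges on reconciling the two descriptions of how Frobenius acts on the inertia: the purely group-theoretic one (inversion, forced by the $S_3$ structure) and the arithmetic one (the $q$-th power map coming from tame ramification theory). It is precisely the congruence $q\equiv 1 \bmod 3$ that makes these incompatible. I would expect the only delicate point to be citing the tame structure theorem in the equal-characteristic setting and justifying that the inertia of a tame extension is cyclic; once these standard facts are in place the contradiction is immediate. I would also note, as a sanity check that the hypothesis is used essentially, that for $q\equiv -1 \bmod 3$ the two actions do match and an $S_3$-extension does exist, so the statement is sharp.
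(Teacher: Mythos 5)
Your proposal is correct and follows essentially the same route as the paper: both reduce to the tame structure theorem, identify the inertia subgroup as the unique cyclic normal subgroup $A_3$ of $S_3$ after excluding the unramified and totally ramified cases, and derive the contradiction from the relation $\sigma\tau\sigma^{-1}=\tau^{q}$ together with $q\equiv 1 \bmod 3$. The only cosmetic difference is that the paper phrases the final step as ``$G$ would be abelian'' while you phrase it as ``$q\equiv -1\bmod 3$''; these are the same contradiction.
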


\begin{proof}
We refer the reader to \cite[Chapter 4 \S 1]{serre1} for notations and results on local fields and their ramification groups. Suppose such an extension $K$ exists. Let $G_{0}$ be the inertia subgroup of $G$. Since $G_{0}$ is normal in $G$, one identifies the quotient $G/G_{0}$ with $Gal(K'/\IF_{q}((t)))$, where $K'$ is the largest unramified subextension of $K$ over $\IF_{q}((t))$. We have thus the following exact sequence
 \[ 1 \to Gal(K/K') \to Gal(K/\IF_{q}((t))) \to Gal(K'/\IF_{q}((t))) \to 1.\]
The quotient group $G/G_{0}$ is isomorphic to the Galois group of the residue field extensions. So it is cyclic. Therefore the field $K$ is ramified over $\IF_{q}((t))$. Moreover $K$ cannot be totally ramified, because
being tamely ramified, such an extension would be cyclic. Hence the inertia group is isomorphic to $\ZZ_{3}$ the unique normal subgroup of $S_{3}$. Thus $G$ is a semidirect product of $\ZZ_{3}$ and $\ZZ_{2}$:
\[ G=\langle \sigma, \tau \!: \sigma^{2}=1, \,\tau^{3}=1, \,\sigma \tau\sigma^{-1}=\tau^{q}\rangle,\]
where $\tau$ is a generator of $Gal(K/K')$ and $\sigma$ a lift of Frobenius of order $2$ generating $Gal(K'/\IF_{q}((t)))$. Now the order of $\tau$ is $3$ and $q \equiv 1 \;\textrm{mod}\,\; 3$, hence $G$ is abelian. But this is a contradiction.
\end{proof}

Assume that the field of definition of the curve $\overline{X}$ is $\IF_{q}$, then the following holds:
\begin{lemma}\label{lemma: abcpointsE}
\mbox{}
\begin{itemize}
\item[$a)$] The $A$-points of $E$ split completely in $\IF_{q}(\overline{X})$ and in $\IF_{q}(X')$.
\item[$b)$] The $B$-points of $E$ split in $\IF_{q}(\overline{X})$ into three rational places having ramification index $2$, while they are ramified in $\IF_{q}(X')$.
\item[$c)$] The $B'$-points of $E$ split in $\IF_{q}(\overline{X})$ into three rational places having relative degree $2$, while they are inert of degree $2$ in $\IF_{q}(X')$.
\item[$d)$] The $C$-points of $E$ are not totally ramified in $\IF_{q}(\overline{X})$. Moreover, if $q \equiv 1 \;\textrm{mod}\,\; 3$, they split in $\IF_{q}(\overline{X})$ into two rational places having ramification index $3$, while they split in $\IF_{q}(X')$ into two unramified rational places.
\item[$e)$] The $C'$-points of $E$ split in $\IF_{q}(\overline{X})$ into two places having relative degree $3$, while they split into two rational places in $\IF_{q}(X')$.
\end{itemize}
\end{lemma}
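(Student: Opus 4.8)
The plan is to read off each splitting type from the decomposition and inertia groups of a place $\mathfrak{P}$ of $\IF_q(\overline{X})$ lying over the given rational place $P$ of $E$, using that $\overline{X}/E$ is $S_3$-Galois. Because the field of definition of $\overline{X}$ is $\IF_q$, the arithmetic Galois group $\mathrm{Gal}(\IF_q(\overline{X})/\IF_q(E))$ coincides with the geometric one $S_3$; hence the decomposition group $D$ and the inertia group $I\le D$ of $\mathfrak{P}$ are honest subgroups of $S_3$, and the splitting of $P$ in an intermediate field $\IF_q(\overline{X})^{U}$ is governed by the action of $D$ (and of its subgroup $I$) on the coset space $S_3/U$: the $D$-orbits index the places above $P$, the $I$-orbits refine these to give the ramification indices, and the residue degrees are the resulting orbit lengths. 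Writing $H\le S_3$ for the order-$2$ subgroup with $\IF_q(X)=\IF_q(\overline{X})^{H}$ and $A_3\le S_3$ for the index-$2$ subgroup with $\IF_q(X')=\IF_q(\overline{X})^{A_3}$, the five point-types of Definition \ref{def: abcpointsE} match, up to conjugacy, the five possibilities for the pair $(D,I)$, and I would organise the proof around these.

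First, the unramified cases $(I=\{1\})$: here $D=\langle \mathrm{Frob}_{\mathfrak P}\rangle$ is cyclic, and the type of $P$ in $\IF_q(X)$ is fixed by the cycle structure of $\mathrm{Frob}_{\mathfrak P}$ on the three cosets $S_3/H$. The identity fixes all three (an $A$-point); a transposition fixes one coset and swaps the other two (a $B'$-point); a $3$-cycle permutes them cyclically (a $C'$-point). Testing the same $D$ against $A_3$ then yields the statements for $\IF_q(X')$ and for $\IF_q(\overline{X})$: $D=\{1\}$ splits $P$ completely everywhere (part $a$); $D$ generated by a transposition meets $A_3$ trivially, so $P$ is inert of degree $2$ in $\IF_q(X')$ and breaks into three places of relative degree $2$ upstairs (part $c$); $D=A_3$ makes $P$ split completely in $\IF_q(X')$ and gives two places of relative degree $3$ upstairs (part $e$). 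These reduce to orbit counts constrained by $\sum_{\mathfrak P\mid P}e(\mathfrak P)f(\mathfrak P)=|S_3|$. For the $B$-points, $P$ is tamely ramified with an order-$2$ place in $\IF_q(X)$, which forces $I$ to be generated by a transposition; since tame inertia is cyclic this is consistent, and I expect $D=I$ of order $2$, producing three places of $\IF_q(\overline{X})$ each with $e=2,\,f=1$ (hence rational) and ramification in $\IF_q(X')$ because the image of $I$ in $S_3/A_3$ is nontrivial, which is part $(b)$.

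The crux is the $C$-point case $(d)$. Such a point is totally ramified of index $3$ in $\IF_q(X)$, so $I$ contains a $3$-cycle and $A_3\le I$. The first claim, that $P$ is \emph{not} totally ramified in $\IF_q(\overline{X})$, follows because total ramification would force $I=D=S_3$, which is impossible since the inertia group of a tamely ramified place is cyclic while $S_3$ is not. To determine the splitting under the extra hypothesis $q\equiv 1\bmod 3$, the decisive point is to exclude the remaining possibility $D=S_3$ as well: a place with $D=S_3$ would realise $S_3$ as the Galois group of a local extension of $\IF_q((t))$, which is exactly what Lemma \ref{lem: Cpoints} rules out. Hence $D=I=A_3$, giving two places of $\IF_q(\overline{X})$ with $e=3,\,f=1$ over $P$ (rational, with ramification index $3$), while $I\le A_3$ shows $P$ is unramified in $\IF_q(X')$ and $D=A_3$ shows it splits there into two rational places.

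The main obstacle is precisely this $C$-point analysis: isolating the step where the decomposition group could a priori be all of $S_3$ and invoking Lemma \ref{lem: Cpoints} to eliminate it. Once that is done, the other four cases are bookkeeping with the subgroup lattice of $S_3$ and the fundamental identity relating $e$, $f$ and the number of places.
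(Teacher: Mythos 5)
Your proposal is correct and agrees with the paper's proof in substance: the paper likewise reduces each case to the decomposition/inertia data of a place of $\IF_{q}(\overline{X})$ (phrased there via the compositum $\IF_{q}(X)\cdot\IF_{q}(Y)$, tameness of ramification, and cyclicity of the decomposition group at unramified places), and it invokes Lemma \ref{lem: Cpoints} at exactly the same crux --- to exclude a residue degree $2$ place above a $C$-point, which is your case $D=S_3$, $I=A_3$. Your uniform bookkeeping with the pairs $(D,I)$ ranging over the subgroup lattice of $S_3$ is a cleaner packaging of the same argument (and arguably fills in the slightly terse counting the paper gives for the $B$- and $B'$-points), but it is not a different route.
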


\begin{proof}
\mbox{}
\begin{itemize}
\item[$a)$] The rational places of $\IF_{q}(E)$ splitting completely in $\IF_{q}(X)$ split completely in the isomorphic function field $\IF_{q}(Y)$ as well. Hence they split completely in the compositum $\IF_{q}(\overline{X})$ and thus in the function field of $X'$ too.
\item[$b)$] Let $P$ be a $B$-point of $E$. Then the number of places of $\IF_{q}(\overline{X})$ lying over the place $P$ of $\IF_{q}(E)$ must be greater than or equal to $2$ and divide $6$. Moreover each of them must have (the same) ramification index $e\geq 2$ and dividing $6$ since the extension is Galois. 
\item[$c)$] The reasoning is analogous to the one above for the $B$-points.
\item[$d)$] The rational places of $\IF_{q}(E)$ that are totally ramified in $\IF_{q}(X)$ are also totally ramified in the isomorphic function field $\IF_{q}(Y)$. Since $\textrm{char}\, \IF_{q}\neq 3$ the ramification is tame, thus they are ramified in the compositum $\IF_{q}(\overline{X})$ with the same ramification index. Hence a place $Q$ of $\IF_{q}(\overline{X})$, lying over $P$ has ramification index $3$. Moreover, suppose $q \equiv 1 \;\textrm{mod}\, \;3$. The degree of $Q$ can be either $2$ or $1$. Lemma \ref{lem: Cpoints} shows that the first case is impossible. In the latter case, there exists another rational place $Q'$ lying over $P$ and having ramification index $3$. This concludes the proof.
\item[$e)$] Let $P$ be a rational place of $\IF_{q}(E)$ inert of relative degree $3$ in $\IF_{q}(X)$. Since $P$ is not ramified neither in $\IF_{q}(X)$ nor in the isomorphic function field $\IF_{q}(Y)$, the place $P$ is not ramified in the compositum $\IF_{q}(\overline{X})$. Since $P$ unramified, its decomposition group must be cyclic. Hence it must have order $3$. So there are two places of relative degree $3$ of $\IF_{q}(\overline{X})$ over $P$ and hence two rational places of $\IF_{q}(X')$ over $P$.
\end{itemize}\vspace{-0.7cm}
\end{proof}

\begin{remark}\label{rem: B'C'type}
In the case of the $B'$-points and the $C$-points the arguments used do not depend on the fact that these ramifying places are rational. Hence the same results hold for higher degree places.
\end{remark}

\section{Non-existence of a genus $4$ curve with $25$ points over $\IF_{7}$}\label{sec: X25}

Let $X$ be a genus $4$ curve having $25$ rational points over $\IF_{7}$. In this section we prove that such a curve $X$ can not exist.

\begin{proposition}\label{prop: deg3coveringE10}
The function field $\IF_{7}(X)$ of $X$ is a degree $3$ extension of the function field $\IF_{7}(E)$ of an elliptic curve $E$ having $10$ rational points over $\IF_{7}$.
\end{proposition}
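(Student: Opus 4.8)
The plan is to pin down the real Weil polynomial $h(t)=\prod_{i=1}^{4}(t-\mu_i)$ of $X$, whose roots $\mu_i$ are real and lie in $[-2\sqrt{7},2\sqrt{7}]$, and then to read the degree-$3$ map directly off Proposition~\ref{prop: HL}. First I would exploit the point counts. From $\#X(\IF_7)=q+1-\sum_i\mu_i=25$ I obtain $\sum_i\mu_i=-17$, and from $\#X(\IF_{49})=q^{2}+1+2gq-\sum_i\mu_i^{2}=106-\sum_i\mu_i^{2}$ together with $a_2(X)=\tfrac12\bigl(\#X(\IF_{49})-\#X(\IF_7)\bigr)\ge 0$ I get $\sum_i\mu_i^{2}\le 81$, while Cauchy--Schwarz gives $\sum_i\mu_i^{2}\ge(\sum_i\mu_i)^{2}/4=72.25$. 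Since $h(t)\in\ZZ[t]$ is monic with $\sum_i\mu_i^{2}=289-2c_2$, where $c_2$ is the coefficient of $t^{2}$, this confines $c_2$ to the five values $\{104,\dots,108\}$; feeding in the further positivity conditions $a_3(X),a_4(X)\ge0$ and the requirement that all roots be real and in range leaves only a short, explicit list of candidate polynomials $h(t)$.

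Next I would eliminate every candidate except $(t+2)(t+5)^{3}$ using Serre's resultant criterion (Proposition~\ref{prop: Res1}). The candidates with integer roots are $(t+2)(t+5)^{3}$, $(t+3)(t+4)(t+5)^{2}$ and $(t+4)^{3}(t+5)$, and the only candidate carrying an irreducible real quadratic factor is $(t+5)^{2}(t^{2}+7t+11)$. For $(t+3)(t+4)(t+5)^{2}$ the coprime splitting $h_1=t+4$, $h_2=(t+3)(t+5)^{2}$ has resultant $h_2(-4)=(-1)(1)^{2}=-1$; for $(t+4)^{3}(t+5)$ the splitting $h_1=t+5$, $h_2=(t+4)^{3}$ has resultant $h_2(-5)=(-1)^{3}=-1$; and for $(t+5)^{2}(t^{2}+7t+11)$ the splitting $h_1=(t+5)^{2}$, $h_2=t^{2}+7t+11$ has resultant equal to the square of $(t^{2}+7t+11)\big|_{t=-5}=25-35+11=1$, hence $1$. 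In each case the resultant is $\pm1$, so Proposition~\ref{prop: Res1} forbids the polynomial. By contrast $(t+2)(t+5)^{3}$ admits only the coprime splitting $(t+2)\mid(t+5)^{3}$, whose resultant is $(t+5)^{3}\big|_{t=-2}=27\ne\pm1$, and therefore survives.

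Finally, with $h(t)=(t+2)(t+5)^{3}$ established, I would apply Proposition~\ref{prop: HL} to the factorization $h(t)=(t-\mu)h_2(t)$ with $\mu=-2$ and $h_2(t)=(t+5)^{3}$. The linear factor $t+2$ is the real Weil polynomial of an elliptic curve $E$ with $\#E(\IF_7)=q+1-\mu=10$, it is coprime to $h_2$, and the radical of $h_2$ is $t+5$, so the relevant resultant is $r=(t+5)\big|_{t=-2}=3\ne\pm1$. Proposition~\ref{prop: HL} then produces a morphism from $X$ to an elliptic curve isogenous to $E$ -- which therefore also has $10$ rational points -- of degree dividing $r=3$. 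A degree-$1$ morphism would be an isomorphism, impossible since $X$ has genus $4$; hence the degree is exactly $3$, and $\IF_7(X)$ is a degree-$3$ extension of the function field of an elliptic curve with $10$ rational points, as claimed.

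The hard part will be the enumeration and elimination step, not the concluding application of Proposition~\ref{prop: HL}. The bound $\sum_i\mu_i^{2}\le 81$ alone does not isolate $(t+2)(t+5)^{3}$, so one must push the positivity conditions $a_3(X),a_4(X)\ge0$ far enough to bound $c_1$ and $c_0$ and close off the list; one must also rule out candidates with an irreducible cubic or quartic factor, where the roots form a single Galois orbit squeezed into the narrow interval forced by $\sum_i\mu_i=-17$. Verifying that $(t+2)(t+5)^{3}$ is the \emph{unique} survivor of Proposition~\ref{prop: Res1} -- in particular computing the resultants for the quadratic-factor candidate and checking that they are genuine units -- is where the real bookkeeping lies.
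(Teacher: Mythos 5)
Your proposal is correct and follows the paper's proof of this proposition essentially verbatim: enumerate the candidate real Weil polynomials compatible with $a_1=25$ and $a_d\ge 0$, use Proposition~\ref{prop: Res1} to eliminate everything but $(t+2)(t+5)^3$, and then apply Proposition~\ref{prop: HL} with resultant $r=3$ to obtain the degree-$3$ map to an elliptic curve with $10$ points. The only difference is that the paper delegates the enumeration step to a short computer calculation, whereas you carry most of it out by hand (correctly, as far as the integer-root and quadratic-factor cases go) and explicitly flag the remaining bookkeeping for irreducible cubic and quartic factors.
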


\begin{proof}
We can compute a list of monic degree $4$ polynomials $h(t)\in \mathbb{Z}[t]$ for which $a_{1}=25$, $a_{d}\ge 0$ for $d\ge 2$ in the relation described in Section \ref{sec: claim3}. Moreover we require that $h(t)$ has all zeros in the interval $[-2\sqrt{7},2\sqrt{7}]$ and that the conditions of Proposition \ref{prop: Res1} are satisfied. A short computer calculation gives that if a genus $4$ curve $X$ having $25$ rational points over $\IF_{7}$ exists there is a unique possibility for its real Weil polynomial, namely
\begin{equation}\label{eq: realWeil}
h(t)=(t + 2)(t + 5)^{3}\quad \textrm{with} \quad a(X)=[25, 1, 115, 576,\ldots].
\end{equation}
Since the resultant of $t+2$ and $t+5$ is $3$, Proposition \ref{prop: HL} applies. Notice that $t+2$ is the real Weil polynomial of a genus $1$ curve with $10$ rational points over $\IF_{7}$.
\end{proof}
 
Let $\IF_7(X)/\IF_7(E)$ be as in Proposition \ref{prop: deg3coveringE10}.

\begin{lemma}\label{lemma: Galois}
The degree $3$ function field extension $\IF_{7}(X)/\IF_{7}(E)$ is not Galois.
\end{lemma}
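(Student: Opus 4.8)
The plan is to suppose that $\IF_{7}(X)/\IF_{7}(E)$ is Galois and derive a contradiction. Since the degree is the prime $3$, being Galois forces the group to be cyclic of order $3$, so I would work throughout with a cyclic cubic cover and exploit that $\mu_{3}\subset\IF_{7}$ (because $7\equiv 1 \bmod 3$), which makes the whole theory of Kummer and of cubic residues available.

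The first, and cleanest, step is to pin down the ramification. In a cyclic cubic cover every rational place of $E$ splits completely (three rational places above), is inert (one place of degree $3$), or is totally ramified (one rational place above); writing $s,i,r$ for their numbers and using that a rational place of $X$ can lie only over a rational place of $E$, the data $\#E(\IF_{7})=10$ and $\#X(\IF_{7})=25$ give $s+i+r=10$ and $3s+r=25$, whose only non-negative solution is $s=8,\,i=1,\,r=1$. Next, a degree-$2$ place of $X$ can lie only over a degree-$2$ place of $E$ with residue degree $1$, so the value $a_{2}(X)=1$ read off from $h(t)=(t+2)(t+5)^{3}$ forces exactly one degree-$2$ place $P_{2}$ of $E$ to be totally ramified and none to split. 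Finally Riemann--Hurwitz gives $\deg\mathfrak{d}=2g_{X}-2-3(2g_{E}-2)=6$, and since $\mathrm{char}\,\IF_{7}=7\ne 3$ the ramification is tame; the rational ramified place $P_{1}$ contributes $2$ and the degree-$2$ place $P_{2}$ contributes $4$ to the different, exhausting the bound. Hence the cover is tamely and totally ramified at exactly $P_{1}$ and $P_{2}$ and nowhere else.

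With the ramification fixed I would use Kummer theory to write $\IF_{7}(X)=\IF_{7}(E)(\sqrt[3]{f})$ with $\mathrm{div}(f)\equiv P_{1}+P_{2}\ (\mathrm{mod}\ 3)$, and translate the splitting into cubic-residue conditions: among the nine rational places of $E$ other than $P_{1}$, the function $f$ must be a cube at exactly eight of them and a non-cube at one. Equivalently, in terms of the order-$3$ Artin character $\chi$ one has $L(t,\chi)L(t,\bar\chi)=(7t^{2}+5t+1)^{3}$, and the Prym variety (the complement of $E$ in $\mathrm{Jac}(X)$) is a $3$-dimensional abelian variety on which the order-$3$ automorphism acts with $\tau^{2}+\tau+1=0$, i.e.\ with $\ZZ[\zeta_{3}]$-multiplication, and with real Weil polynomial $(t+5)^{3}$; hence it is isogenous to $\tilde{E}^{3}$, where $\tilde{E}$ is the CM elliptic curve of Remark with $13$ points. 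Thus a cyclic cover would force $\mathrm{Jac}(X)\sim E\times\tilde{E}^{3}$. I note that Lemma \ref{lem: Cpoints} gives no help here: at $P_{2}$ the local extension is totally tamely ramified and \emph{cyclic} (since $\mu_{3}\subset\IF_{49}$), so any obstruction must be global rather than local.

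The hard part is precisely this last step. All of the numerical invariants I can compute --- the point counts over every $\IF_{7^{n}}$, the factorization $(t+2)(t+5)^{3}$, even the coefficient-by-coefficient shape of $L(t,\chi)=(1-\alpha t)^{a}(1-\bar\alpha t)^{3-a}$ --- turn out to be consistent with the cyclic case, so the contradiction cannot be combinatorial and must come from finer arithmetic. I would therefore attack it by one of two routes: either a global cubic-reciprocity (product-formula) argument on $E$ showing that no $f$ with the prescribed divisor can produce the required ``eight cubes, one non-cube'' pattern, or the Howe--Lauter obstruction to $E\times\tilde{E}^{3}$ being a Jacobian, using crucially that $E\not\sim\tilde{E}$ so the two factors share no isogeny component. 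Establishing that one of these obstructions genuinely fails to vanish --- as opposed to merely being numerically consistent --- is where the real difficulty lies.
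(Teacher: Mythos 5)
There is a genuine gap: your reduction of the problem is correct and matches the paper's, but you never derive the contradiction. You correctly show that a Galois (hence cyclic) cubic cover would have $s=8$, $i=1$, $r=1$ on rational places, that the different has degree $6$, and that it must be supported on the totally ramified rational place $P$ and one totally ramified degree-$2$ place $Q$, i.e.\ $D=2P+2Q$. But the proof stops at ``one of these obstructions should fail to vanish,'' which is not an argument. Moreover, one of your two proposed routes cannot work even in principle: the isogeny decomposition $\mathrm{Jac}(X)\sim E\times\tilde{E}^{3}$ is forced by the real Weil polynomial $(t+2)(t+5)^{3}$ alone, and so holds whether or not the cover is Galois; a Howe--Lauter gluing obstruction therefore carries no information that could distinguish the cyclic case, and indeed the resultant of $t+2$ and $t+5$ is $3$, not $\pm1$, so no such obstruction is available.

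The missing idea is to make your other route --- the Kummer/cubic-residue condition --- effective via class field theory. Since the hypothetical extension is abelian with conductor dividing $P+Q$ and the eight places of $E(\IF_{7})\setminus\{P,T\}$ split completely, $\IF_{7}(X)$ must be a subfield of the ray class field of $\IF_{7}(E)$ of conductor $P+Q$ in which those eight places split; equivalently, the ray class group $\mathrm{Cl}_{P+Q}$ modulo the subgroup generated by the classes of those eight places must have a quotient of order $3$. This is a finite computation: normalizing $P$ to be the origin and using the elliptic involution to cut the choices of the inert place $T$ in half, one runs over both isomorphism classes of $E$ with $10$ points ($y^{2}=x^{3}+x+4$ and $y^{2}=x^{3}+3x+4$), all admissible $T$, and all degree-$2$ places $Q$. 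The paper's MAGMA computation shows this quotient is trivial in every case, which is the contradiction. Without carrying out this (or an equivalent cubic-reciprocity) computation, the lemma is not proved.
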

  
\begin{proof}
Suppose that $\IF_{7}(X)/\IF_{7}(E)$ is Galois. Since $E$ has $10$ rational points and $X$ has $25$, the only possibility for the splitting behavior of the rational places of $\IF_{7}(E)$ is that eight places split completely in $\IF_{7}(X)$, one place $P$ is (tamely) totally ramified and one place $T$ is inert, i.e. it gives rise to a place of degree $3$ in $\IF_{7}(X)$. Moreover, by the Hurwitz genus formula the degree of the different $D$ of $\IF_{7}(X)/\IF_{7}(E)$ is $2\cdot4-2=6$. Hence we can only have that $D=2P+2Q$, where $Q$ is a place of $\IF_{7}(E)$ of degree $2$. The function field of $X$ is hence a subfield of the ray class field of $\IF_{7}(E)$ of conductor $P+Q$, where all rational places in $E(\IF_{7})\backslash \{P,T\}$ split completely. By translation we can always assume that $P$ equals the point at infinity of $E$ and using the elliptic involution we can let $T$ vary among only half of $E(\IF_{7})\backslash \{P\}$. Up to isomorphism there are two elliptic curves over $\IF_{7}$ with $10$ rational points. They have affine equations $y^{2}=x^{3}+x+4$ and $y^{2}=x^{3}+3x+4$. A short MAGMA computation allows to conclude that this ray class field is trivial for both curves $E$. See the appendix for the MAGMA code.
\end{proof}

Since the extension $\IF_{7}(X)/\IF_{7}(E)$ is non-Galois, we may apply Lemma \ref{lemma: abcpointsE}. In the following Lemma we present the possibilities for the numbers $a$, $b$, $\ldots$ of $A$-points, $B$-points, $\ldots$ of $E$. Moreover we consider the curve $\overline{X}$ whose function field is the Galois closure of $\IF_{7}(X)$ with respect to $\IF_{7}(E)$. We compute its number $\overline{N}$ of rational points and its genus $\overline{g}$ in each case. 

\begin{lemma}\label{lemma: nonGalois}
There are five possibilities for the splitting behavior of the rational places of $\IF_{7}(E)$ in $\IF_{7}(X)$. The curve $\overline{X}$ is defined over $\IF_{7}$ in any of these cases. Its genus $\overline{g}$ and its number $\overline{N}$ of rational points are displayed in Table \ref{tab: tab1}.
\begin{table}[h]
\begin{center}
\begin{tabular}{| c | c | c | c | c | c | c | c |}
\hline
$\textrm{case}$ & $a$ & $b$ & $b'$ & $c$ & $c'$ & $\stackrel{\phantom{\_}}{\overline{N}}$ & $\overline{g}$\\
\hline 
$\textrm{I}$ &  $8$ & $0$ & $1$ & $0$ & $1$ & $48$ & $10$\\ 
\hline
$\textrm{II}$ &  $8$ & $0$ & $0$ & $1$ & $1$ & $50$ & $7\;\, \textrm{or}\;\,9$\\
\hline 
$\textrm{III}$ &  $7$ & $2$ & $0$ & $0$ & $1$ & $48$ & $8\;\, \textrm{or}\;\,10$\\
\hline 
$\textrm{IV}$ &  $7$ & $1$ & $1$ & $1$ & $0$ & $47$ & $9$ \\
\hline
$\textrm{V}$ &  $6$ & $3$ & $1$ & $0$ & $0$ & $45$ & $10$\\
\hline 
\end{tabular}
\caption{Splitting behavior of the rational places of $\IF_{7}(E)$ in $\IF_{7}(X)$}\label{tab: tab1}
\end{center}
\end{table}
\end{lemma}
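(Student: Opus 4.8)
The plan is to turn the splitting data into Diophantine constraints, solve them, and then read off the invariants of $\overline{X}$ from the $S_3$-structure of $\IF_7(\overline{X})/\IF_7(E)$.

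First I would count in two ways. Since $E$ has $10$ rational places, each of exactly one type in Definition \ref{def: abcpointsE}, we get $a+b+b'+c+c'=10$. Counting instead the rational places of $\IF_7(X)$ above them—an $A$-point gives $3$, a $B$-point $2$, a $B'$- and a $C$-point one each, a $C'$-point none—gives $3a+2b+b'+c=25$. Two further restrictions cut the solution set down to five. First, every $B'$-point produces one degree-$2$ place of $X$, so $b'\le a_2(X)=1$, the value $a_2(X)=1$ being read off the real Weil polynomial \eqref{eq: realWeil}. Second, by Hurwitz $\deg D_{X/E}=2\cdot4-2-3(2\cdot1-2)=6$; the (tame) ramification at rational places contributes $b+2c$ to this different (a $B$-point carries one place of index $2$, a $C$-point one of index $3$), while every ramified place of $E$ of degree $\ge2$ contributes at least $2$. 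Hence $6-(b+2c)$ cannot equal $1$, i.e. $b+2c\neq5$. Subtracting the two counting equations gives $c'=2a+b-15$, and a short enumeration over $a\le8$ subject to $b'\le1$ and $b+2c\neq5$ leaves exactly the five rows of Table \ref{tab: tab1}.

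Next I would show $\overline{X}$ is defined over $\IF_7$, i.e. that $\IF_7$ is the full constant field of $\IF_7(\overline{X})$. A constant field extension of $\IF_7(E)$ inside $\IF_7(\overline{X})$ is everywhere unramified with cyclic Galois group, hence corresponds to a cyclic quotient of $S_3$; the only nontrivial one is $S_3\to S_3/A_3\simeq\ZZ_2$, cutting out $X'$. So a larger constant field would force $\IF_7(X')=\IF_{49}(E)$, in which every rational place of $E$ is inert. But in each case $a\ge6>0$, and by Lemma \ref{lemma: abcpointsE}$(a)$ the $A$-points split completely in $X'$—a contradiction. Then I would compute the invariants. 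Since a rational place of $\IF_7(\overline{X})$ can only lie over a rational place of $\IF_7(E)$, Lemma \ref{lemma: abcpointsE} gives $\overline{N}=6a+3b+2c$ ($A$-, $B$-, $C$-points contribute $6$, $3$, $2$, while $B'$- and $C'$-points contribute none), reproducing the $\overline{N}$-column. For the genus I would use Hurwitz for $\overline{X}/E$: at a place of degree $d$ with cyclic inertia of order $e$ the contribution to $\deg D_{\overline{X}/E}$ is $(e-1)(6/e)d$, i.e. $3d$ for $e=2$ and $4d$ for $e=3$, whereas the contributions to $\deg D_{X/E}$ are $d$ and $2d$. Writing $\deg D_{X/E}=S_2+S_3=6$ with $S_2$, $S_3$ the order-$2$ and order-$3$ parts, we get $\deg D_{\overline{X}/E}=3S_2+2S_3=12+S_2$, so $\overline{g}=7+S_2/2$. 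The admissible $S_2$ are then pinned down by: $S_2$ even, $S_2\ge b$, $S_3\ge2c$, each non-rational ramified place raising $S_2$ (resp. $S_3$) by at least $2$ (resp. $4$), and $a_2(X)=1$ forbidding all but a few low-degree ramified places. A clean extra filter is the double cover $X'$, which ramifies exactly at the order-$2$ places, so $g_{X'}=1+S_2/2=\overline{g}-6$ and $N_{X'}=2a+b+2c+2c'$; requiring $X'$ not to exceed the maximal number of points of a curve of its genus excludes the smallest values of $S_2$. Working case by case then yields the single value, or the two possible values, in the last column.

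The main obstacle is exactly this last point: the two point-counts are blind to what happens over the non-rational places of $E$, so the division of the leftover different $6-(b+2c)$ into its order-$2$ and order-$3$ parts—equivalently the value of $S_2$, equivalently $\overline{g}$—is not fixed by the splitting type alone. Controlling it requires combining the parity of $S_2$, the degree bounds on non-rational ramified places, the constraint $a_2(X)=1$, and the geometry of the auxiliary curve $X'$; where more than one value of $S_2$ survives all of these, two genera must be recorded, which is the origin of the ``or'' entries in Table \ref{tab: tab1}.
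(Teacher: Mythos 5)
Your proposal is correct and follows the same skeleton as the paper: the two counting equations \eqref{eq: set1}, the different-degree constraints $b+2c\le 6$, $b+2c\ne 5$, the exclusion of $(a,b,b',c,c')=(7,1,2,0,0)$ via $a_2(X)=1$, the formula $\overline{N}=6a+3b+2c$, and Hurwitz applied to $\overline{X}/E$. Where you genuinely diverge is in the organization of the genus step: the paper enumerates by hand which non-rational places of $E$ can carry the leftover different $6-(b+2c)$, whereas you package everything into the split $\deg D_{X/E}=S_2+S_3$ and the identity $\overline{g}=7+S_2/2$, and you add the auxiliary quadratic curve $X'$ (with $g_{X'}=1+S_2/2$, $N_{X'}=2a+b+2c+2c'$) as a filter. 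This is more than a cosmetic difference: the paper's write-up silently assumes that, absent a totally ramified degree-$2$ place, all remaining ramification is of index-$2$ type, and so does not address the configuration in case I where a degree-$3$ place of $E$ is totally ramified ($S_2=0$, $\overline{g}=7$); your $X'$ filter disposes of it cleanly ($N_{X'}=18$ on a genus-$1$ curve is impossible), and in fact the same filter shows the entry ``$7$'' in case II is also superfluous ($N_{X'}=20$), so your method proves a statement at least as strong as the table. The one soft spot is that the decisive case-by-case determination of the admissible $S_2$ is asserted (``working case by case then yields\dots'') rather than written out; the constraints you list do suffice, but for a complete proof you should execute that enumeration explicitly, since it is exactly where the content of the last two columns of Table \ref{tab: tab1} lies.
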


\begin{proof}
The curve $X$ has $25$ rational points over $\IF_{7}$, while the curve $E$ has $10$. Hence by Lemma \ref{lemma: abcpointsE} we have that the numbers $a$, $b$, $\ldots$ of $A$-points, $B$-points, $\ldots$ of $E$ must satisfy:
\begin{equation}\label{eq: set1}
\left \{
\begin{array}{l}
3a+2b+b'+c = 25\\
a+b+b'+c+c' =10
\end{array}
\right.
\end{equation}
The different $D$ of the function field extension $\IF_{7}(X)/\IF_{7}(E)$ has degree $6$ by the Hurwitz formula. Moreover the contribution of all ramifying rational points can not be exactly $5$. Hence we also have that 
\begin{equation}\label{eq: set2}
\left \{
\begin{array}{l}
b+2c \leq 6\\
b+2c\neq 5.
\end{array}
\right.
\end{equation}
The values of $a$, $b$, $\ldots$ that satisfy both \eqref{eq: set1} and \eqref{eq: set2} are those displayed in Table \ref{tab: tab1} plus the values $(a,b,b',c,c')=(7,1,2,0,0)$. But this case can never occur: $b'=2$ implies that $\IF_{7}(X)$ has at least two places of degree $2$. Which contradicts the fact that $a_{2}(X)=1$ as in \eqref{eq: realWeil}. We remark that $\overline{X}$ is indeed defined over $\IF_{7}$ since in any case the number $a$ of $A$-points of $E$ in non-zero (the full constant field of the function field of $\overline{X}$ is always contained in the residue fields of its places). The number $\overline{N}$ of $\IF_{7}$-rational points of $\overline{X}$ is $6a+3b+2c$ by Lemma \ref{lemma: abcpointsE}. The genus $\overline{g}$ of $\overline{X}$ is computed by means of the Hurwitz formula $2\overline{g}-2=\textrm{deg}\, \overline{D}$, where $\overline{D}$ is the different of the function field extension $\IF_{7}(\overline{X})/\IF_{7}(E)$. We determine the degree of the divisor $\overline{D}$ as follows.\\
The ramifying rational places of $\IF_{7}(E)$ give a contribution of $b+2c$ to the degree of the different $D$. Since $a_{2}(X)=1$, there can be at most one degree $2$ place of $\IF_{7}(E)$ that (totally) ramifies in $\IF_{7}(X)$. If there is such a ramified place, than $b'=0$: by Lemma \ref{lemma: abcpointsE} and Remark \ref{rem: B'C'type}, the degree of $\overline{D}$ is $12$ in case $II$ and $14$ in case $III$. Thus we have $\overline{g}=7$ and $\overline{g}=8$ respectively. In case there is no such a ramifying place, there always exists a unique place of $\IF_{7}(E)$ of degree $6-(b+2c)$ splitting in $\IF_{7}(X)$ into two places of the same degree, one having ramification index $2$ and one unramified. In case $I$ there is moreover the possibility that two places of $\IF_{7}(E)$ of degree $(6-(b+2c))/2=3$ appear in the support of $D$, each of them splitting in $\IF_{7}(X)$ into two places of the same degree, one having ramification index $2$ and one unramified. By Lemma \ref{lemma: abcpointsE} and Remark \ref{rem: B'C'type}, the degree of $\overline{D}$ is $18$ for both possibilities in case $I$ and thus $\overline{g}=10$. The degree of $\overline{D}$ is $16$ in cases $II$ and $IV$ and it is $18$ in cases $III$ and $V$, giving $\overline{g}=9$ and $\overline{g}=10$ respectively.
\end{proof}

\begin{lemma}\label{lem: noXbar}
The does not exist a curve $\overline{X}$ with genus $\overline{g}$ and number $\overline{N}$ of $\IF_{7}$-rational points as displayed in Table \ref{tab: tab1}.
\end{lemma}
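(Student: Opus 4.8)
The plan is to discard all five lines of Table~\ref{tab: tab1} simultaneously, by showing that the hypothetical curve $\overline{X}$ would have strictly fewer points over $\IF_{49}$ than over $\IF_{7}$, which is impossible. Everything hinges on the fact that the $S_3$-structure essentially determines the real Weil polynomial of $\overline{X}$, and in particular forces it to be divisible by $(t+5)^6$.

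First I would compute the Zeta function of $\overline{X}$ as in the proof of Theorem~\ref{teo: theorem}(2), now using the factorization of the $L$-function of the Galois cover $\IF_{7}(\overline{X})/\IF_{7}(E)$ into the Artin $L$-functions of the irreducible representations of $S_3$ (trivial, sign, standard). Matching these against the rational base $E$, the quadratic subextension $X'$ and the cubic subextension $X$ yields the isogeny relation $L_{\overline{X}}=L_X^2\,L_{X'}/L_E^2$, and hence, for the associated real Weil polynomials, $h_{\overline{X}}(t)=h_X(t)^2\,h_{X'}(t)/h_E(t)^2$. By Proposition~\ref{prop: deg3coveringE10} we have $h_X(t)=(t+2)(t+5)^3$ and $h_E(t)=t+2$, so that $h_X^2/h_E^2=(t+5)^6$ and therefore
\[
h_{\overline{X}}(t)=(t+5)^6\,h_{X'}(t),
\]
where $h_{X'}$ is the real Weil polynomial of $X'$, a curve defined over $\IF_{7}$ (by Lemma~\ref{lemma: nonGalois}) of genus $\overline{g}-6$.

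Next I would invoke the trivial inequality $\#\overline{X}(\IF_{49})\ge \#\overline{X}(\IF_{7})=\overline{N}$. Writing $\mu_1,\dots,\mu_{\overline{g}}$ for the roots of $h_{\overline{X}}$, the factor $(t+5)^6$ forces six of them to equal $-5$, while the remaining $\overline{g}-6$ roots contribute a nonnegative amount to $\sum_i\mu_i^2$; hence $\sum_i\mu_i^2\ge 6\cdot 25=150$. Since $\#\overline{X}(\IF_{49})=7^2+1+2\cdot 7\cdot\overline{g}-\sum_i\mu_i^2$, this gives
\[
\#\overline{X}(\IF_{49})\le 50+14\overline{g}-150=14\overline{g}-100 .
\]
Scanning Table~\ref{tab: tab1}, and checking both admissible values of $\overline{g}$ in Cases~II and~III, the right-hand side is always strictly below $\overline{N}$; the tightest instance is $\overline{g}=10$, where $14\overline{g}-100=40<45\le\overline{N}$. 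This contradicts $\#\overline{X}(\IF_{49})\ge\overline{N}$, so none of the five cases can occur.

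The step that demands genuine care is the first: justifying the factorization $h_{\overline{X}}=(t+5)^6\,h_{X'}$. One must check that the ``standard-representation'' factor $h_X/h_E=(t+5)^3$ is an honest polynomial in $\ZZ[t]$ (it is, since $E$ is a subcover of $X$ and so $h_E\mid h_X$), and that $X'$ is genuinely a curve over $\IF_{7}$, so that the eigenvalue count for $\#\overline{X}(\IF_{49})$ and $\#\overline{X}(\IF_{7})$ may be read directly off $h_{\overline{X}}$. Once this structural input is secured, the rest is a single uniform estimate: the bound $\sum_i\mu_i^2\ge 150$ uses only the six guaranteed roots at $-5$ and is completely insensitive to the precise shape of $h_{X'}$, which is why all five cases fall at once.
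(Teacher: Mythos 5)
Your proof is correct, but it follows a genuinely different route from the paper. The paper splits into cases: for cases I--IV it simply quotes the Oesterl\'e bound (a curve over $\IF_7$ with $48$, resp.\ $45$, points must have genus at least $11$, resp.\ $10$), and only case V --- where $\overline{X}$ would sit exactly on the Oesterl\'e bound --- needs real work: there the paper uses $b'=1$ to force $a_2(\overline{X})\ge 3$, runs a computer search that leaves the single candidate $h(t)=(t+3)^3(t+4)^7$, and kills it with the resultant-$1$ criterion of Proposition~\ref{prop: Res1}. You instead exploit the $S_3$-structure once and for all: the Artin/Kani--Rosen relation $L_{\overline{X}}L_E^2=L_X^2L_{X'}$ together with $h_X=(t+2)(t+5)^3$ and $h_E=t+2$ forces $(t+5)^6\mid h_{\overline{X}}$, whence $\sum_i\mu_i^2\ge 150$ and $\#\overline{X}(\IF_{49})\le 14\overline{g}-100$, which is below $\overline{N}$ in every line of Table~\ref{tab: tab1} (the worst case being $\overline{g}=10$, giving $40<45$). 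I checked the arithmetic and the representation-theoretic bookkeeping ($\mathrm{Ind}_{\ZZ_2}^{S_3}1=1+\mathrm{std}$, $\mathrm{Ind}_{A_3}^{S_3}1=1+\varepsilon$, $\zeta_{\overline{X}}=L(1)L(\varepsilon)L(\mathrm{std})^2$); both are sound, and the needed facts that $\overline{X}$ has full constant field $\IF_7$ and that the remaining roots of $h_{\overline{X}}$ are real are supplied by Lemma~\ref{lemma: nonGalois} and the Weil bounds. Note that you do not even need $h_{X'}$ to be the real Weil polynomial of an actual curve --- only that the leftover roots contribute nonnegatively to $\sum_i\mu_i^2$ --- so the argument is robust. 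What your approach buys is uniformity: no Oesterl\'e bound, no computer enumeration, no case distinction, and no use of the extra datum $a_2(\overline{X})\ge 3$. What it costs is the Artin $L$-function formalism for the $S_3$-cover, a heavier (though standard) input than anything the paper invokes in this lemma; if you write it up, that divisibility $(t+5)^6\mid h_{\overline{X}}$ is the one step that must be justified carefully, exactly as you flag.
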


\begin{proof}
By the Oesterl\'e bound \cite[Theorem 7.3]{schoof} a curve having $48$ (resp. $45$) rational points over $\IF_{7}$ must have genus at least $11$ (resp. $10$). Hence in the first four cases the curve $\overline{X}$ can not exist because it has too many points for its genus. In $\textrm{case V}$ the curve $\overline{X}$ has exactly $45$ rational points. Moreover, since $b'=1$, the curve $\overline{X}$ has at least three places of degree $2$ by Lemma \ref{lemma: abcpointsE}. We search for the real Weil polynomial $h(t)$ of such a curve. See Section \ref{sec: claim3} for the relation between $h(t)$ and the coefficients $a_{d}$ of the Zeta function of $\overline{X}$. We compute a list of monic degree $4$ polynomials $h(t)$ with integer coefficients for which $a_{1}=25$, $a_{2}\ge 3$ and $a_{d}\ge 0$ for $d\ge 3$. Moreover we require that $h(t)$ has all zeros in the interval $[-2\sqrt{7},2\sqrt{7}]$. A short computer calculation gives that there is only one such a polynomial, namely
\[ h(t)=(t+3)^{3}(t+4)^{7}  \quad \textrm{with} \quad a(\overline{X})=[45, 3, 17, 807, \ldots]. \]
But since the resultant of $t+3$ and $t+4$ is $1$, we have a contradiction by Proposition \ref{prop: Res1} and hence also in this case the curve $\overline{X}$ does not exist.
\end{proof}

Summing up these results we prove the first part of the Theorem.

\begin{proof}[Proof of Theorem \ref{teo: theorem} part 1.]
Let $X$ denote a genus $4$ curve over $\IF_{7}$. We pointed out in the Introduction that an upper bound for the number of rational points of $X$ is $25$. If a curve $X$ with $25$ rational points exists, then Proposition \ref{prop: deg3coveringE10} implies that it is a degree $3$ covering of an elliptic curve $E$ with $10$ rational points. On the other hand, Lemmas \ref{lemma: Galois}, \ref{lemma: nonGalois} and \ref{lem: noXbar} show that the curve $\overline{X}$ and hence the curve $X$ can not exist. This proves that every genus $4$ curve over $\IF_{7}$ has at most $24$ rational points.
\end{proof}

\section{Appendix}

We list here the MAGMA code used for the ray class field computation in Lemma \ref{lemma: Galois}.
\begin{verbatim}
kx<x> := FunctionField(GF(7));
kxy<y> := PolynomialRing(kx);
E:=FunctionField(y^2-x^3-x-4);  
     // alternatively E:=FunctionField(y^2-x^3-3*x-4);  
Genus(E);
P:=Places(E,1);
print  "Rational places of E:  ",P;
Q:=Places(E,2); #Q;
for i in {2, 4, 5, 8, 9} do             
     // this sets the rational place T to be (x,y+2), (x+5,y),
     // (x+1,y+3), (x+3,y+x), (x+2,y+1)
for j:=1 to #Q do
D:=1*P[1]+1*Q[j];
     // P[1] is the place at infinity
S:={2,3,4,5,6,7,8,9,10} diff {i};
      // set of splitting places
R, mR := RayClassGroup(D);
U := sub<R | [P[x]@@mR : x in S]>;
if not (#quo<R|U> eq 1) then
print "********************************************************";
quo<R|U>;
C := FunctionField(AbelianExtension(D, U)); C;
print "Genus", Genus(C);
print "Number of places a(C)=[",#Places(C, 1),",",#Places(C, 2),
                     ",",#Places(C, 3),",",#Places(C, 4),",...]";
print "Degree 2 place of E ramifying is Q=", Q[j]; 
print "Inert place of E is T=", P[i];
end if; end for; end for;
\end{verbatim}

\vfill

\noindent
\textsc{Alessandra Rigato}\\
K.U.~$\!$Leuven, Department of Mathematics,\\
Celestijnenlaan 200 B, B-3001 Leuven (Heverlee), Belgium\\
\verb+Alessandra.Rigato@wis.kuleuven.be+

\end{document}